\theoremstyle{plain}
\newtheorem{thm}{Theorem}[section]
\newtheorem{lem}[thm]{Lemma}
\newtheorem{prop}[thm]{Proposition}
\newtheorem{cor}[thm]{Corollary}
\newtheorem*{tpeq}{Topological Equivalence Classification}
\newtheorem*{mt}{Main Theorem}
\theoremstyle{definition}
\newtheorem{defn}[thm]{Definition}
\theoremstyle{remark}
\newtheorem{rem}[thm]{Remark}
\newcommand{\bb}{\mathbb}
\newcommand{\ca}{\mathcal}
\newcommand{\ti}{\tilde}
\newcommand{\tti}{\widetilde}
\title{Topological Conjugacy of Non-hyperbolic Linear Flows}
\author{Tongmu He\thanks{Department of Mathematical Sciences, Tsinghua University, Beijing, China}\thanks{Email address: hetm15@mails.tsinghua.edu.cn}}
\begin{document}
\maketitle

\begin{abstract}
The topological equivalence classification for linear flows on $\bb{R}^n$ had been completely solved by Kuiper and independently Ladis in 1973. However, Ladis' proof was published in a Russian journal which isn't easily available, Kuiper's proof is more topological and a little bit subtle. Aiming at topological conjugacy classification, mainly based on the ideas of Kuiper, we introduce other techniques and try to present an elementary and self-contained proof just using linear algebra and elementary topology.
\end{abstract}

\section{Introduction}

A differential equation always can be written in autonomous form $\dot{x} = v(x)$ where $x\in \bb{R}^n$. And its solution orbits form some geometry near a point. If $v(x_0)\neq 0$, basic knowledge in ordinary equations tells us we can rectify the vector field in a neighborhood of $x_0$ and then we know the geometry near such point (non-singular) is trivial. So we want to deal with the case $v(x_0)=0$ (singular point).

Note that linear field approximates a general field, which lead us to the case $\dot{x} = Ax$ ($A$ is a linear operator in $\bb{R}^n$). Its solutions are $x(t)=e^{tA}x_0$ where $x_0$ are initial points. We want to classify the topology of the solutions i.e. the flows $e^{tA}$ near $x_0=0$, therefore we introduce two types of equivalence relations on the topology of flows: topologically equivalent and topologically conjugate (see section \ref{preli}).

We see that two topologically conjugate flows must be equivalent. In fact, Kuiper had solved the classification problem for topologically equivalence of flows $e^{tA}$ near $x_0=0$ in his paper \cite{kuiper1973flow} in 1973 (published in 1975). Before his paper, Kuiper and Robbin considered a generalized classification problem, that is, for general linear transformations, which can be regarded as discrete-time dynamical systems. To be precise, they wanted to classify linear transformations from $V$ to $\tti{V}$ up to the equivalence relation: $f\sim\ti{f}$ if $h\circ f=\ti{f}\circ h$. But the latter seems to be more difficult and remains unsolved (see Kuiper and Robbin \cite{kuiper1973endo}). 

Considering the real Jordan form of $A$, $\bb{R}^n$ decomposes into three invariant subspaces $V_+\oplus V_-\oplus V_0$ corresponding to the eigenvalues with positive, negative, zero real parts respectively. It's obvious that the point $x_0\neq 0$ lying in $V_+$ (resp. $V_-$) goes to infinity (resp. $0$) when acted by the flow $e^{tA}$ ($t\to +\infty$). These initial points lying in $V_+\oplus V_-$ tend to be easy to deal with, we call it ``hyperbolic case'' while the case of $V_0$ is called ``non-hyperbolic case''. The conclusion of the topological equivalence classification for linear flows given in Kuiper's paper \cite{kuiper1973flow} is (roughly stated):
\begin{tpeq}
Linear flows $e^{tA},e^{t\tti{A}}$ are topologically equivalent $\Longleftrightarrow$ $(\dim V_+,\dim V_-)$$=(\dim \tti{V}_+,\dim \tti{V}_-)$ and $A_0,\tti{A_0}$ are linear conjugate.
\end{tpeq}

By using this classification result, Ayala and Kawan give a complete topological conjugacy classification for real projective flows in \cite{ayala2014proj}, 2014 (the real projective flows are naturally induced by linear flows on Euclidean space via quotient map to $\bb{R}P^n$). As for the discrete-time case of projective flows, there is also only a partial result given by Kuiper in \cite{kuiper1976proj}, 1976. The classification for linear flows on vector bundles also has been studied. Its hyperbolic case is treated in Ayala, Colonius and Kliemann \cite{ayala2007vect}.

Although the topological equivalence classification for linear flows had been solved by Kuiper and independently Ladis, Ladis' proof \cite{ladis1973flow} was published in 1973 on a Russian journal which is not easily available today, while Kuiper's proof (1973) is more topological and a little bit subtle. Many relevant textbooks nowadays will contain the proof for hyperbolic case, but little material for non-hyperbolic case, while only their work has been cited. We've absorbed Kuiper's ideas, aiming at weaker classification: topological conjugacy classification, we can present a self-contained proof just using only linear algebra and elementary topology, even more concise and accessible to everyone (easy to find and understand).

The key to the proof in this paper is that, as for weaker classification, the period of each flow orbit becomes topologically invariant, which is not in Kuiper's proof. So Kuiper used some deeper topological notions to provide invariants (related to ratios of periods). However, once using period as invariant, we may get a concise proof. We also write this proof in detail. 

\begin{mt}\label{mainthm}
Given $n$-dimensional Euclidean spaces $V,\tti{V}$, and $A,\tti{A}$ real linear opertors on them respectively. Then,
linear flows $e^{tA},e^{t\tti{A}}$ are topologically conjugate $\Longleftrightarrow$ $(\dim V_+,\dim V_-)$$=(\dim \tti{V}_+,\dim \tti{V}_-)$ and $A_0,\tti{A_0}$ are linear conjugate,
where $V=V_+\oplus V_-\oplus V_0$ corresponding to the eigenvalues with positive, negative, zero real parts respectively, $A_+=A|_{V_+}$, $A_-=A|_{V_-}$, $A_0=A|_{V_0}$ and so on.
\end{mt}

The proof for hyperbolic case is canonical (see \cite{colo2014dyna}) which won't be contained in this paper but sketched now. Sufficiency: for flow $e^{tA_+}$, we construct a surface in $\bb{R}^n$ which transversally intersects each orbit, whence we topologically rectify the flows and show it's conjugate to $e^t$. This surface is defined by a well-chosen quadric form, named Lyapunov function.  Necessity: the points in $V_+$ will be sent to infinity when acted by flow $e^{tA}$ ($t\to +\infty$), so $h$ induces homeomorphism between $V_+,\tti{V}_+$, then a fact in topology (invariance of dimension, see prop.(\ref{invdim})) implies $\dim V_+=\dim \tti{V}_+$. However, the non-hyperbolic case, to show $A_0,\tti{A_0}$ are linear conjugate, is not easy. We have to find enough topological invariants to show the Jordan forms of them are exactly the same.

Following Kuiper, we first focus on the bounded orbits, they form an invariant space corresponding to those upper-left corners of Jordan blocks subordinate to zero real parts (section \ref{sect1}). Moreover, in order to show those corners (eigenvalues) are the same, we consider compact orbits. Just like rotation $e^{i\beta t}$, the eigenvalues related to a compact orbit have rational ratios, and periods help distinguish the irrational relation between eigenvalues (section \ref{sect2}). As for the eigenvalues of the same rational type, different from Kuiper's technique, we introduce a characteristic mapping $\chi$ to record period of each point. See the period of $(e^{2it} x_0,e^{3it} x_0)$ is $T=2\pi$ $(x_0\neq 0)$, the period of $(e^{2it} x_0, e^{4it} x_0)$ is $T=\frac{1}{2}\cdot2\pi$, we conclude that the image of $\chi$ is related to the greatest common divisors of eigenvalues. By using this observation, we find a good invariant $\dim \overline{\chi^{-1}(q)}$ to show those corners indeed coincide (section \ref{sect3}). What remains unknown are the sizes of Jordan blocks, thus we transcribe Kuiper's elegant proof in which he finds a topological notion recording their information. For example, on $xOy$-plane the orbits of the flow $\exp({t\begin{pmatrix}0&1\\0&0\end{pmatrix}})=\begin{pmatrix}1&t\\0&1\end{pmatrix}$ are zero, half $x$-axises and other horizontal lines. We see that although the points $(0,0),(1,0)$ are not on the same orbit, there are orbits (namely $y=c\to 0$) approach to them simultaneously. In general, those points correspond to the upper-left quarters of the Jordan blocks and therefore this approaching phenomenon records the final information we need (section \ref{sect4}). We'll apply topological view and coordinate view in turn, one for invariance and the latter for proof.

\section{Preliminaries}\label{preli}
\begin{defn}
Given an $n$-dimensional real (complex) linear space $V$, a {\bfseries flow} on $V$ is a continuous map $f:\bb{R}\times V\to V$ (denoted by $f(t,x)=f^tx$) satisfying $f^{s+t}=f^s\circ f^t, f^0=id$. An {\bfseries orbit} of $f^t$ is the subset $\ca{O}_f(x_0)=\{f^tx_0\}_{t\in\bb{R}}$ where $x_0$ is some point in $V$.
\end{defn}
\begin{defn}
Two flows $f^t,\ti{f}^t$ on $V,\tti{V}$ are called {\bfseries topologically equivalent}, if there exists a homeomorphism $h:V\to \tti{V}$, and for any $x_0$ there is a strictly increasing continuous map $\tau:\bb{R}\to\bb{R}$ such that $h\circ f^{\tau(t)}=\ti{f}^t\circ h\ (\forall t\in\bb{R})$ (sending orbit to orbit and preserving the orientation of time). They are called {\bfseries topologically conjugate}, if $h\circ f^t=\ti{f}^t\circ h\ (\forall t\in\bb{R})$.
\end{defn}

We introduce some facts in linear algebra and topology:
\begin{prop}[Invariance of dimension]\label{invdim}
Two Euclidean spaces $V,\tti{V}$ are homeomorphic if and only if $\dim{V}=\dim{\tti{V}}$.
\end{prop}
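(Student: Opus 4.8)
The plan is to dispatch the two implications quite asymmetrically, since only one of them has any content. For the ``if'' direction there is essentially nothing to do: choosing linear isomorphisms $V\cong\bb{R}^n\cong\tti{V}$ and recalling that on a finite-dimensional real vector space all norms induce the same topology, any linear bijection $V\to\tti{V}$ is automatically continuous with continuous inverse, hence a homeomorphism, so equal dimensions give homeomorphic spaces. All the substance is in the converse, which is Brouwer's \emph{invariance of dimension}. First I would reduce it to the assertion that $\bb{R}^m$ and $\bb{R}^n$ are not homeomorphic when $m\neq n$: a homeomorphism $V\to\tti{V}$ restricts to a homeomorphism between small open balls, and an open ball in $\bb{R}^n$ is homeomorphic to $\bb{R}^n$ itself.

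To separate $\bb{R}^n$ from $\bb{R}^m$ I would attach to a Euclidean space a topological invariant that reads off its dimension, and the most economical one is the local homology at a point. Concretely, for any $x\in\bb{R}^n$ I would compute $H_k\big(\bb{R}^n,\bb{R}^n\setminus\{x\};\bb{Z}\big)$ by excising the complement of a ball about $x$, feeding the result into the long exact sequence of the pair, and using that $\bb{R}^n\setminus\{x\}$ deformation retracts onto a sphere $S^{n-1}$; this yields a group that is $\bb{Z}$ precisely when $k=n$ and $0$ otherwise (the degenerate case $n=1$ being handled by inspecting $S^0$ directly). Since a homeomorphism $h\colon\bb{R}^n\to\bb{R}^m$ carries the pair $(\bb{R}^n,\bb{R}^n\setminus\{x\})$ onto $(\bb{R}^m,\bb{R}^m\setminus\{h(x)\})$ and hence induces isomorphisms of all of these relative homology groups, comparing the single degree in which the group fails to vanish forces $m=n$.

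If one insists on staying strictly within ``elementary topology'' and avoiding singular homology, I would run the parallel argument through the Lebesgue covering dimension: it is a topological invariant by definition, and the Lebesgue covering theorem---which follows from Sperner's lemma, equivalently from the no-retraction theorem asserting that there is no retraction $D^n\to S^{n-1}$---shows that $[0,1]^n$, hence every nonempty open subset of $\bb{R}^n$, hence $\bb{R}^n$, has covering dimension exactly $n$, so homeomorphic Euclidean spaces share the same $n$. I expect the main obstacle to be the same in both routes and genuinely unavoidable: some nontrivial input about spheres (the nonvanishing of the top homology of $S^{n-1}$, or the no-retraction/fixed-point theorem proved combinatorially via Sperner's lemma) must be imported; once that single fact is granted, the remainder is formal manipulation of excision and exact sequences, or of refinements of open covers, and presents no real difficulty. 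Given the elementary spirit of the paper one may instead simply cite this classical result, but the sketch above is the route I would take to keep the argument self-contained.
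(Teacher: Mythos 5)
Your argument is correct, but note that the paper does not prove this proposition at all: it is listed among the background ``facts in linear algebra and topology'' and used as a black box, exactly as you anticipate in your closing sentence. Your reduction of the ``if'' direction to the equivalence of norms on finite-dimensional spaces, the localization of the ``only if'' direction to the statement that $\bb{R}^m\not\cong\bb{R}^n$ for $m\neq n$, and the computation of the local homology $H_k(\bb{R}^n,\bb{R}^n\setminus\{x\})$ via excision, the long exact sequence, and the deformation retraction onto $S^{n-1}$ are all standard and sound; the alternative route through Lebesgue covering dimension and Sperner's lemma is likewise a valid classical proof. The only caveat is that both routes import machinery (singular homology, or the no-retraction theorem) well beyond the ``linear algebra and elementary topology'' toolkit the paper restricts itself to, which is presumably why the author chose to cite the result rather than prove it; in the context of this paper, simply invoking Brouwer's theorem is the intended move, and your sketch is a correct account of what lies behind it.
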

\begin{prop}\label{complex}
Two complex-linear-conjugate real matrices are real-linear-conjugate.
\end{prop}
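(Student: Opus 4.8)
The plan is to exploit that the intertwining relation $XA=BX$ is \emph{linear} in $X$, so it passes transparently through complexification, whereas invertibility --- the sole nonlinear constraint --- can be restored afterwards by a genericity argument using the infinitude of $\bb{R}$. This is a classical argument, so I expect no real difficulty, only bookkeeping.

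So suppose $A,B$ are real $n\times n$ matrices and $P$ is an invertible \emph{complex} matrix with $PA=BP$. Let $W=\{X\in M_n(\bb{R}):XA=BX\}$, a real linear subspace of $M_n(\bb{R})$. First I would write $P=P_1+iP_2$ with $P_1,P_2$ real and separate real and imaginary parts of $PA=BP$; since $A,B$ are real this gives $P_1A=BP_1$ and $P_2A=BP_2$, i.e. $P_1,P_2\in W$. (In fact the whole complex solution space $\{X\in M_n(\bb{C}):XA=BX\}$ equals $W+iW$, but only $P_1,P_2\in W$ is needed below.)

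Next I would manufacture an \emph{invertible} element of $W$ as a real combination of $P_1$ and $P_2$. Consider $p(t)=\det(P_1+tP_2)$, a polynomial in the single real indeterminate $t$ with real coefficients. Since $p(i)=\det(P_1+iP_2)=\det P\neq 0$, the polynomial $p$ is not identically zero, hence has only finitely many roots; as $\bb{R}$ is infinite there is some $t_0\in\bb{R}$ with $p(t_0)\neq 0$. Then $Q:=P_1+t_0P_2$ is real, invertible, and lies in $W$ (being a real linear combination of $P_1,P_2\in W$), so $QA=BQ$ with $Q$ real and invertible, which is exactly the claim.

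I do not expect a serious obstacle; the only point needing any thought is the last step, namely that one cannot impose invertibility directly on the real subspace $W$ since it is not a linear condition --- this is precisely what the remark ``the zero set of the nonzero real polynomial $t\mapsto\det(P_1+tP_2)$ cannot be all of $\bb{R}$'' gets around. An alternative route avoiding even this would be to invoke rational (Frobenius) canonical forms, since conjugacy over any field is governed by the invariant factors and these are insensitive to the ground field; but the polynomial argument above is shorter and stays entirely within elementary linear algebra.
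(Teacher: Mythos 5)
Your argument is correct and complete: splitting $P=P_1+iP_2$, noting both parts lie in the real intertwining space $W$, and choosing $t_0\in\bb{R}$ off the finite zero set of the nonzero polynomial $t\mapsto\det(P_1+tP_2)$ (nonzero because its value at $t=i$ is $\det P\neq 0$) is exactly the standard proof of this fact. The paper itself states Proposition \ref{complex} without proof, treating it as a known piece of linear algebra, so your write-up simply supplies the canonical argument the paper relies on; the alternative you mention via rational canonical forms (invariant factors are insensitive to field extension) is the other standard route and would work equally well.
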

\begin{prop}[Jordan form]
Given a linear operator $A$ of $n$-dimensional real(resp. complex) linear space $V$, there is a basis $\{v_i\}_{1\leq i\leq n}$ such that:
\begin{displaymath}
A(v_1,\dots,v_n)=(v_1,\dots,v_n)
\begin{pmatrix}
J_1& & & \\
 & J_2 & &\\
 & & \ddots &\\
 & & & J_k
\end{pmatrix}
\end{displaymath}
where the Jordan block $J$ is of the form:$J(\lambda)=
\begin{pmatrix}
\lambda&1& & \\
 & \lambda &1&\\
 & & \ddots &1\\
 & & & \lambda
\end{pmatrix}
$ if $\lambda\in\bb{R}$ (resp. if $\lambda\in\bb{C}$), else $J(\lambda)=
\begin{pmatrix}
\Lambda&E& & \\
 & \Lambda &E&\\
 & & \ddots &E\\
 & & & \Lambda
\end{pmatrix}$ where $\Lambda=\begin{pmatrix} Re(\lambda)&Im(\lambda) \\-Im(\lambda) & Re(\lambda)\end{pmatrix}$, $E=\begin{pmatrix}0&0\\1&0\end{pmatrix}$.
\end{prop}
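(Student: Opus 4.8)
The plan is to reduce the statement to the case of a nilpotent operator and then assemble the blocks by hand. Let $k$ denote $\bb{R}$ or $\bb{C}$, let $p(x)\in k[x]$ be the minimal polynomial of $A$, and factor $p=\prod_i p_i^{m_i}$ into powers of distinct monic irreducibles. The polynomials $p/p_i^{m_i}$ are globally coprime, so a B\'ezout (partial-fraction) argument produces projections onto the $A$-invariant subspaces $W_i:=\ker p_i(A)^{m_i}$ with $V=\bigoplus_i W_i$, and $p_i(A)$ is nilpotent on $W_i$. It thus suffices to treat a single $W_i$. The essential ingredient, valid over any field, is:

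\textbf{Nilpotent Lemma.} If $N$ is a nilpotent operator on a finite-dimensional space $W$, then $W$ has a basis that is a disjoint union of chains $w,Nw,\dots,N^{\ell-1}w$ with $N^\ell w=0$.

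I would prove this by induction on $\dim W$; the case $N=0$ is clear. Otherwise $NW\subsetneq W$, so induction gives a chain basis of $NW$, with chain tops $b_1,\dots,b_p$ of lengths $k_1,\dots,k_p$. Choose $c_r$ with $Nc_r=b_r$, and extend the chain bottoms $N^{k_r-1}b_r\in\ker N$ to a basis of $\ker N$ by adjoining vectors $d_1,\dots,d_q$. Then the chains through the $c_r$ (each one step longer than the chain through $b_r$) together with the length-one chains $d_l$ form a basis of $W$: the cardinality $\sum_r(k_r+1)+q$ equals $\dim NW+\dim\ker N=\dim W$ by rank--nullity, and linear independence is checked by applying $N$ to a putative relation, which annihilates the $d_l$ and the new tops $N^{k_r}c_r$, reduces the rest to independence of the chain basis of $NW$, and then forces the leftover terms to vanish by independence of the bottoms and the $d_l$ in $\ker N$.

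For the complex case each $p_i$ is $x-\lambda_i$, so $A-\lambda_i$ is nilpotent on $W_i$; listing each chain from bottom to top gives an $A$-invariant subspace on which $A$ has matrix $J(\lambda_i)$, and concatenation yields the complex Jordan form. For the real case, each $p_i$ is either $x-\lambda$ with $\lambda\in\bb{R}$ --- handled exactly as before, producing real blocks $J(\lambda)$ --- or $p_i(x)=(x-a)^2+b^2$ with $b>0$, corresponding to eigenvalues $\lambda=a\pm bi$. For such $p_i$, complexify: $W_{i,\bb{C}}=W_i\otimes_{\bb{R}}\bb{C}=\ker(A_{\bb{C}}-\lambda)^{m_i}\oplus\ker(A_{\bb{C}}-\overline\lambda)^{m_i}$, and the conjugation $\sigma$ fixing $W_i$ commutes with $A_{\bb{C}}$ and swaps the two summands. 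Apply the complex case to $\ker(A_{\bb{C}}-\lambda)^{m_i}$ and transport by $\sigma$; picking one Jordan chain $e_1,\dots,e_\ell$ of $\lambda$ (paired with its conjugate chain of $\overline\lambda$), the real and imaginary parts of $e_1,\dots,e_\ell$ span a $\sigma$-stable, $A$-invariant real subspace $U\subseteq W_i$ with $\dim_{\bb{R}}U=2\ell$, on which $A$ has minimal polynomial $p_i^{\,\ell}$; hence $A|_U$ is cyclic and $\xi:=2\,\mathrm{Re}(e_\ell)=e_\ell+\overline{e_\ell}\in U$ is a cyclic vector. Set
\[
v_j:=b^{\,j-\ell}\,p_i(A)^{\ell-j}\xi,\qquad u_j:=\tfrac1b(A-aI)v_j\qquad(1\le j\le\ell),\quad v_0:=0 .
\]
Then $p_i(A)v_j=b\,v_{j-1}$, hence $Av_j=av_j+bu_j$ and $Au_j=au_j-bv_j+v_{j-1}$; and $\{u_1,v_1,\dots,u_\ell,v_\ell\}$ is a basis of $U$ because, up to nonzero scalars, it equals $\{\,p_i(A)^l\xi,\ (A-aI)p_i(A)^l\xi:0\le l<\ell\,\}$, the images under $q\mapsto q(A)\xi$ of polynomials of the distinct degrees $0,1,\dots,2\ell-1$ (namely $2l$ and $2l+1$), while $q\mapsto q(A)\xi$ is injective on polynomials of degree $<2\ell=\deg p_i^{\,\ell}$. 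In the ordered basis $(u_1,v_1,u_2,v_2,\dots,u_\ell,v_\ell)$ the matrix of $A|_U$ is precisely the block $J(\lambda)$ with diagonal blocks $\Lambda=\begin{pmatrix}a&b\\-b&a\end{pmatrix}$ and superdiagonal blocks $E=\begin{pmatrix}0&0\\1&0\end{pmatrix}$. Running over all chains of $\lambda$ decomposes $W_i$, and concatenating over all $i$ gives the real Jordan form.

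The main obstacle is twofold. The Nilpotent Lemma is short but its linear-independence step is the one genuinely load-bearing computation. In the real quadratic case, one must choose the real basis so that the superdiagonal blocks emerge as $E$ rather than as the identity matrix (which is what the naive choice of real and imaginary parts gives), and this forces the slightly unusual pairing $(u_j,v_j)$ above together with the degree count showing it is still a basis. The remaining ingredients --- the primary decomposition, the complex case, and gluing blocks together --- are routine.
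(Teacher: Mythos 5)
The paper states this proposition as a standard preliminary and gives no proof of its own, so there is nothing to compare against; what matters is whether your argument stands on its own, and it does. The primary decomposition via B\'ezout, the chain-basis lemma for nilpotent operators, and the assembly of complex blocks are all correct (one terminological slip: in the independence check the vectors $N^{k_r}c_r$ are the new chain \emph{bottoms}, not tops, which is exactly why $N$ annihilates them). The real quadratic case is the part that genuinely needed care, and you handled it correctly: the paper's normalization with superdiagonal blocks $E=\begin{pmatrix}0&0\\1&0\end{pmatrix}$ is not what the naive real-and-imaginary-parts basis produces, and your basis $(u_1,v_1,\dots,u_\ell,v_\ell)$ with $p_i(A)v_j=b\,v_{j-1}$ does yield $Av_j=av_j+bu_j$ and $Au_j=au_j-bv_j+v_{j-1}$, which is precisely the stated block with $\Lambda=\begin{pmatrix}a&b\\-b&a\end{pmatrix}$. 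The only point you assert rather than verify is that $\xi=e_\ell+\overline{e_\ell}$ is a cyclic vector of $A|_U$; this needs $p_i(A)^{\ell-1}\xi\neq0$, which follows from $p_i(A)e_j=(A-\overline{\lambda})e_{j-1}=2bi\,e_{j-1}+e_{j-2}$, giving $p_i(A)^{\ell-1}\xi=(2bi)^{\ell-1}e_1+(-2bi)^{\ell-1}\overline{e_1}\neq0$ since $e_1$ and $\overline{e_1}$ lie in distinct generalized eigenspaces. With that one line added, the proof is complete.
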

\begin{rem}
The vectors $v_i$ corresponding to the blocks subordinate to eigenvalues with positive real parts span the invariant space $V_+$, and so on.
\end{rem}

We focus on non-hyperbolic case of classification:
\begin{mt}[Non-hyperbolic case]\label{mainthm2}
Conditions are the same as previous. Then,
linear flows $e^{tA},e^{t\tti{A}}$ are topologically conjugate $\Longrightarrow$ $A_0,\tti{A_0}$ are linear conjugate.
\end{mt}
\paragraph{Complexification}
Rather than $V,\tti{V}$ themselves, we consider their complexifications to extend scalar-multiplication to $\bb{C}$. That is, consider the functor $\bb{C}\otimes_{\bb{R}}\underline{\hbox to 3mm{}}$. Via the one-to-one mapping $v\mapsto 1\otimes v$, the invariant spaces, flows in real case can be identified with that in their complexifications. If we can verify that $A_0,\tti{A_0}$ (in fact $id\otimes A_0,id\otimes\tti{A_0}$) are complex linear-conjugate, then following prop.(\ref{complex}) they are real linear-conjugate in original spaces. Hence we only need to show the blocks (complex Jordan forms) subordinate to zero real parts of $A$ are the same as that of $\tti{A}$.

\section{Bounded Orbit Subspaces}\label{sect1}
Suppose there is a homeomorphism $h:V\to \tti{V}$ such that $h\circ e^{tA}=e^{t\tti{A}}\circ h$. By translation, we assume $h(0)=0$. Roughly speaking, a {\bfseries topological notion} (or {\bfseries topologically invariant}) is a notion can be defined topologically in $V$ and parallelly in $\tti{V}$, whence $h$ may ``map'' this notion to its parallel.

\begin{defn}
A subset $W$ of $V$ is called an {\bfseries orbit family}, if for any $x\in W$, we have $Ax\in W$ and $\ca{O}(x)\subseteq W$. An orbit family $W$ is called an {\bfseries orbit subspace}, if $W,h(W)$ are both linear subspaces.
\end{defn}

We introduce some topological notions at first: $\bb{B}=\{x\in V\ |\ \ca{O}(x)\textrm{ is bounded}\}$, $\bb{D}=\{x\in V\ |\ \ca{O}(x)\textrm{ is a single point}\}$ (It's obvious that $h$ induces homeomorphisms from $\bb{B}$ to $\bb{\tti{B}}$ and from $\bb{D}$ to $\bb{\tti{D}}$). One can check they are linear subspaces, whence they are orbit subspaces.

We now consider orbit subspaces $B$ in $\bb{B}$ (assign $h(B)$ to be its parallel notion) and $D=\bb{D}\cap B$. We assert that $B$ corresponds those upper-left corners of some blocks subordinate to zero real parts. To show that explicitly, we tend to find the {\bfseries coordinate representation} of $B$.

Suppose $A|_B$ has Jordan form $
\begin{pmatrix}
J_1& &\\
 &\ddots &\\
 & & J_v
\end{pmatrix}$, $z\in B$ having nonzero coordinate $(z_1,z_2,\dots, z_m)^T$ corresponding to one Jordan block $
\begin{pmatrix}
\lambda&1& \\
 & \ddots &1\\
 & & \lambda
\end{pmatrix}_{m\times m}$.
According to the definition of $B$, $\ca{O}(z)=\{e^{tA}z\}_{t\in\bb{R}}$ is bounded. That is,
\begin{displaymath}
e^{t\lambda}
\begin{pmatrix}
1&\frac{t}{1!}&\ldots&\frac{t^{m-1}}{(m-1)!}\\
 & 1 &\ddots&\vdots\\
 & & \ddots &\frac{t}{1!}\\
 & & & 1
\end{pmatrix}
\begin{pmatrix}
z_1\\z_2\\ \vdots\\ z_m
\end{pmatrix}
\end{displaymath}
is bounded. So we have $Re(\lambda)=0$ and $z_2=\cdots=z_m=0$. So $B\subseteq V_0$, and $A|_B$ can be diagonalized. Each orbit $\ca{O}(z)$ in $B$ has coordinates
\begin{displaymath}
\{(y,e^{i\beta_1t}z_1,\dots,e^{i\beta_vt}z_v)\ |\ \ t\in\bb{R}\}\textrm{ where }\beta_i\in (0,2\pi),y\in D,z_i\in\bb{C}
\end{displaymath}
The following two sections focus on showing the eigenvalues are the same, i.e. $(\beta_1,\dots,\beta_v)=(\ti{\beta_1},\dots,\ti{\beta_{v'}})$.

\section{Rational Equivalence Classes}\label{sect2}
Two real numbers $\beta_i,\beta_j$ are {\bfseries rational equivalent}, if $\beta_i\bb{Q}=\beta_j\bb{Q}$ (as cosets in multiplication group $\bb{R}/\bb{Q}$). Then $\{\beta_1,\dots,\beta_v\}$ are divided into several equivalence classes. For $i$-th class $\{\beta_{k_1},\dots,\beta_{k_m}\}$, by coordinate form derived from the last section, we define
\begin{displaymath}
C_i=\bigcup_{z_{k_j}\in\bb{C}}\{(0,\dots,e^{i\beta_{k_1}t}z_{k_1},\dots,e^{i\beta_{k_m}t}z_{k_m},\dots)\ |\ \ t\in\bb{R}\}
\end{displaymath}
(All are zero except the positions corresponding to $z_{k_1},\dots,z_{k_m}$)

Obviously, $C_i$ is an orbit subspace of $B$. And the minimal positive period of each orbit, issuing from a nonzero point in $C_i$, is an integral multiple of $\frac{2\pi}{\beta_{k_j}}$ (if $z_{k_j}\neq0$).

We have decomposition $B=D\oplus C_1\oplus C_2\cdots \oplus C_r$. If one consider the coordinate form of each compact orbit in $B$, we then obtain that $D\oplus C_1\cup D\oplus C_2 \cup\cdots\cup D\oplus C_r$ is the union of all the compact orbits in $B$ (since the related eigenvalues of an orbit with finite period must have ration ratios to each other).

Suppose $C_1,C_2,\dots,C_r$ correspond to the cosets $\eta_1\bb{Q},\eta_2\bb{Q},\dots,\eta_r\bb{Q}$. We assert those cosets are topological notions.

From the view of topology, we assign each non-degenerate compact orbit in $B$ with $\frac{2\pi}{T}\bb{Q}$, where $T$ is its minimal positive period. According to the knowledge about the phase curves of autonomous equations, we know ``minimal positive period'' is a topological notion (since if $T$ is a period of point $x$, then $\ti{f}^T\circ h(x)=h\circ f^T(x)=h(x)$ implies $T$ is also a period of $h(x)$).

Under this assignment, $C_1,C_2,\dots,C_r$ still correspond to $\eta_1\bb{Q},\eta_2\bb{Q},\dots,\eta_r\bb{Q}$. So they are topologically invariant. Moreover, since $h$ maps compact orbit to compact orbit, singleton orbit to singleton orbit, those invariants shows that $h$ induces homeomorphism
\begin{displaymath}
h:\ D\oplus C_i \longrightarrow \tti{D}\oplus \tti{C_i}
\end{displaymath}

\section{Elements in Rational Equivalence Classes}\label{sect3}
Denote $C=C_i$, for $D\oplus C$, let subset $\Sigma_0$ be the union of non-degenerate orbits (topological notion), i.e we have coordinate form $\Sigma_0=D\oplus C\setminus D\oplus 0$.

Consider {\bfseries characteristic mapping}
\begin{displaymath}
\chi:\ \Sigma_0 \longrightarrow \bb{R} \ \textrm{ sending }\  x\mapsto \frac{2\pi}{T}
\end{displaymath}
where $T$ is the minimal positive period of $\ca{O}(x)$. So this mapping is also a topological notion. We call each image of $\chi$ by {\bfseries singular value}.

From the view of coordinate, by omitting the coordinates not belonging to $D\oplus C$, we may write down this concise form $C=\cup_{z_i\in\bb{C}}\{(0,e^{i\beta_1t}z_1,\dots,e^{i\beta_mt}z_m)\ |\ \ t\in\bb{R}\}$ where $\beta_i\in (0,2\pi)$ are rational equivalent.
Suppose $(\beta_1,\dots,\beta_m)=\beta\cdot(p_1,\dots,p_m)$ where $p_i$ are positive integers and $\gcd(p_1,\dots,p_m)=1$.

We now are able to write down the explicit form of the characteristic mapping.
For $x=(y,e^{i\beta_1t}z_1,\dots,e^{i\beta_mt}z_m)\in D\oplus C$, if $z_{i_1},\dots,z_{i_k}$ are all the nonzero elements in $z_1,\dots,z_m$, then 
\begin{displaymath}
\chi(x)=\beta\cdot\gcd(p_{i_1},\dots,p_{i_k})
\end{displaymath}

\begin{prop}\label{charinv}
For any singular value $q$, the closure of preimage (topological notion) $\overline{\chi^{-1}(q)}$ is a linear space. Moreover, $\dim_{\bb{C}}\overline{\chi^{-1}(q)}-\dim_{\bb{C}}D$ is the cardinality of $\{i\ |\ 1\leq i\leq m,\ \frac{q}{\beta}\ |\ p_i\}$.
\end{prop}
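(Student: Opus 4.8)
The plan is to reduce to the explicit coordinate description of $D\oplus C$ and of $\chi$ recorded just above, and then compute the closure of $\chi^{-1}(q)$ by hand. Write $d:=q/\beta$. Since $q$ is a singular value, the formula $\chi(x)=\beta\cdot\gcd(p_{i_1},\dots,p_{i_k})$ shows that $d$ is a positive integer and, more precisely, that $d=\gcd\{p_j : j\in S_0\}$ for some nonempty $S_0\subseteq\{1,\dots,m\}$ (the support of some $z$ with $\chi(\cdot)=q$). Put $I_d:=\{i : 1\le i\le m,\ d\mid p_i\}$ and let $W:=D\oplus\{(z_1,\dots,z_m) : z_i=0\text{ for all }i\notin I_d\}$, viewed as a coordinate subspace of $D\oplus C$. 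By the explicit form of $\chi$, a point $(y,z_1,\dots,z_m)\in\Sigma_0$ lies in $\chi^{-1}(q)$ precisely when its support $S:=\{j : z_j\neq 0\}$ is nonempty and $\gcd\{p_j : j\in S\}=d$.

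I would then establish two elementary number-theoretic facts. First, every such $S$ is contained in $I_d$: if some $j\in S$ had $d\nmid p_j$, then $d\nmid\gcd\{p_j : j\in S\}$, a contradiction; hence $\chi^{-1}(q)\subseteq W$. Second, $\gcd\{p_j : j\in I_d\}=d$: on one hand $d$ divides each $p_j$ with $j\in I_d$, so $d\mid\gcd\{p_j:j\in I_d\}$; on the other hand $S_0\subseteq I_d$, so $\gcd\{p_j : j\in I_d\}$ divides $\gcd\{p_j:j\in S_0\}=d$. Consequently the set $U$ of points $(y,z_1,\dots,z_m)$ with $z_i\neq 0$ for all $i\in I_d$ and $z_i=0$ for all $i\notin I_d$ lies in $\chi^{-1}(q)$ (its support is exactly $I_d$, which realizes gcd $d$), and $U$ is dense in $W$. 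Since $W$ is a linear, hence closed, subspace of the finite-dimensional ambient space (so its closure in $D\oplus C$ and in the whole space agree), we conclude $\overline{\chi^{-1}(q)}=\overline U=W$. This $W$ is visibly a $\bb{C}$-linear subspace, and $\dim_{\bb{C}}W-\dim_{\bb{C}}D=|I_d|=|\{i : \tfrac{q}{\beta}\mid p_i\}|$, which is the assertion. (That $\overline{\chi^{-1}(q)}$ is a topological notion follows because $\chi$ is one and closure is a topological operation, so this description transports under $h$.)

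I do not anticipate a real obstacle; the one subtlety deserving explicit attention is the second fact above — that the \emph{full} index set $I_d$ already has gcd exactly $d$, not a proper multiple of $d$. This is precisely the point where it matters that $q$ is a genuine singular value, i.e.\ that it arises as $\beta$ times the gcd of an actual support $S_0$, rather than being an arbitrary integer multiple of $\beta$; without this one could not guarantee that $\chi^{-1}(q)$ is nonempty or that $U\subseteq\chi^{-1}(q)$. Everything else is bookkeeping with supports of coordinate vectors together with the trivial observation that $\{(z_1,\dots,z_m) : z_i\neq 0\text{ for all }i\in I_d\}$ is dense in the corresponding coordinate subspace.
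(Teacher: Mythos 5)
Your argument is correct and is essentially the paper's own proof: both reduce to the coordinate description of $\chi$, take $I_{q/\beta}=\{i:\tfrac{q}{\beta}\mid p_i\}$, and use the fact that $q$ is a singular value to conclude $\gcd\{p_j: j\in I_{q/\beta}\}=q/\beta$, so that $\overline{\chi^{-1}(q)}$ is exactly $D$ plus the coordinate subspace indexed by $I_{q/\beta}$. Your write-up merely makes explicit the two inclusions and the density step that the paper leaves implicit.
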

\begin{proof}
From the view of coordinate, we have $\overline{\chi^{-1}(q)}=\{(y,\dots,z_{i_1},\dots,z_{i_k},\dots)\ |\ k>0,1\leq i_1<\cdots<i_k\leq m,\gcd(p_{i_1},\dots,p_{i_k})=q/\beta, z_{i_1},\dots,z_{i_k}\in\bb{C},y\in D\}$. Consider the index set $\{j_1,\dots,j_l\}=\{j\ |\ p_j\textrm{ is divisible by }q/\beta\}$. Since $q$ is a singular value, $q/\beta$ is the greatest common divisor of some $p_i$. Hence we have $\gcd(p_{j_1},\dots,p_{j_l})=q/\beta$. Therefore $\overline{\chi^{-1}(q)}=D\oplus \bb{C}z_{j_1}\oplus\cdots \oplus\bb{C}z_{j_l}$ is a linear space and $\dim\overline{\chi^{-1}(q)}-\dim D$ is the number of the $p_i$ divisible by $q/\beta$.
\end{proof}

\begin{lem}\label{eigensame}
The eigenvalues in $B$ and in $\tti{B}$ are exactly the same.
\end{lem}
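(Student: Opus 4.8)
The plan is to work one rational equivalence class at a time, using the characteristic mapping $\chi$ as the source of invariants. By Section~\ref{sect2} we may reindex so that $h$ restricts to a homeomorphism $D\oplus C_i\to\tti D\oplus\tti C_i$ for each $i$, with $C_i$ and $\tti C_i$ subordinate to the same coset $\eta_i\bb{Q}$; in particular there are equally many classes on the two sides, and $h$ carries $D$ onto $\tti D$ (single-point orbits), so $\dim_{\bb{C}}D=\dim_{\bb{C}}\tti D$ by Proposition~\ref{invdim}. Since $A|_B$ is diagonalizable (Section~\ref{sect1}), the lemma reduces to the following: for a matched pair $C=C_i$, $\tti C=\tti C_i$ with eigenvalue data $(\beta_1,\dots,\beta_m)=\beta\cdot(p_1,\dots,p_m)$ and $(\tti\beta_1,\dots,\tti\beta_{m'})=\tti\beta\cdot(\tti p_1,\dots,\tti p_{m'})$ (positive integers, each tuple of $\gcd$ one), the multisets $\{\beta_1,\dots,\beta_m\}$ and $\{\tti\beta_1,\dots,\tti\beta_{m'}\}$ coincide. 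Summing over $i$ and adjoining the zero eigenvalues (matched since $\dim_{\bb{C}}D=\dim_{\bb{C}}\tti D$) then gives the statement.

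First I would pin down $\beta$. Because $\chi$ is a topological notion and $h$ conjugates the flows, $\chi=\tti\chi\circ h$ on the non-degenerate part, so $\chi$ and $\tti\chi$ have the same image. By the explicit formula, $\operatorname{im}\chi=\{\beta\cdot\gcd(p_{i_1},\dots,p_{i_k})\}$ over all nonempty index sets; every value is $\ge\beta$, and $\beta$ itself occurs (take all indices, using $\gcd(p_1,\dots,p_m)=1$). Hence $\beta=\min(\operatorname{im}\chi)=\min(\operatorname{im}\tti\chi)=\tti\beta$, and consequently the sets of divisor values $S=\{\gcd(p_{i_1},\dots,p_{i_k})\}$ and $\tti S$ coincide. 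Next, for $q\in\operatorname{im}\chi$ the homeomorphism $h$ maps $\chi^{-1}(q)$ onto $\tti\chi^{-1}(q)$, hence their closures onto one another; by Proposition~\ref{charinv} both closures are linear subspaces, so Proposition~\ref{invdim} gives them equal complex dimension, and subtracting $\dim_{\bb{C}}D=\dim_{\bb{C}}\tti D$ yields
\[
\#\{\,i:d\mid p_i\,\}=\#\{\,j:d\mid\tti p_j\,\}\qquad\text{for every }d\in S=\tti S.
\]

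It remains to recover the multiset $\{p_1,\dots,p_m\}$ from the knowledge of $S$ together with the counting function $f(d)=\#\{i:d\mid p_i\}$ on $d\in S$; this is the genuinely combinatorial heart of the proof, and where I expect the only real difficulty to sit. I would argue by contradiction: if $\{p_i\}\neq\{\tti p_j\}$ as multisets, let $P'$ be the largest integer whose multiplicity in $\{p_i\}$ differs from its multiplicity in $\{\tti p_j\}$, say occurring more often on the left. Then $P'$ is one of the $p_i$, so $P'=\gcd(\{P'\})\in S=\tti S$; on the other hand $f(P')-\tti f(P')$ equals exactly the difference of the multiplicities of $P'$, because every multiple of $P'$ strictly larger than $P'$ has equal multiplicity on both sides (by the maximality of $P'$, and since all $p_i,\tti p_j$ lie in $S$, hence are $\le\max S$). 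Thus $f(P')>\tti f(P')$, contradicting the display; so the multisets agree. The one special case that anchors this is $P'=\max S$, where $f(\max S)$ is precisely the multiplicity of the largest $p_i$. The remaining points are routine: that $\operatorname{im}\chi$, $\chi^{-1}(q)$ and its closure are transported by $h$ is immediate since each is defined purely from orbits and periods, and that $D,\tti D$ and the relevant closures are linear subspaces is exactly Proposition~\ref{charinv} together with Section~\ref{sect1}.
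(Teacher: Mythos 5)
Your proposal is correct and follows essentially the same route as the paper: match classes and $\dim D$ via the invariants of Sections~\ref{sect1}--\ref{sect2}, identify $\beta=\tti{\beta}$ as the minimal singular value, and use $\dim\overline{\chi^{-1}(q)}-\dim D$ from Proposition~\ref{charinv} to compare divisor-counting functions. The only (cosmetic) difference is the last combinatorial step, where the paper sorts the $p_i$ and inducts downward from $p_m=\ti{p}_m$, while you take the largest integer of differing multiplicity and derive a contradiction --- the same extremal argument in different clothing.
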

\begin{proof}

The number of zero eigenvalues is $\dim D=\dim\tti{D}$ (by the invariance of dimension, prop.(\ref{invdim})). Hence it suffices to show the eigenvalues in $D\oplus C$ and in $\tti{D}\oplus \tti{C}$ are exactly the same. The numbers of nonzero eigenvalues are also the same. Suppose $(\beta_1,\dots,\beta_m)=\beta\cdot(p_1,\dots,p_m)$, $\gcd(p_1,\dots,p_m)=1$, and $(\ti{\beta_1},\dots,\ti{\beta_m})=\ti{\beta}\cdot(\ti{p_1},\dots,\ti{p_m})$, $\gcd(\ti{p_1},\dots,\ti{p_m})=1$.

According to the invariance of period, $\chi(x)=\chi(h(x))$. The singular value sets of $\Sigma_0$ and $\tti{\Sigma_0}$ coincide. So we know $\beta=$the minimal singular value$=\ti{\beta}$. Assuming $p_1\leq\dots\leq p_m$, $\ti{p_1}\leq\dots\leq\ti{p_m}$, we have $\beta\cdot p_m=$the maximal singular value$=\ti{\beta}\cdot\ti{p_m}$, whence $p_m=\ti{p_m}$.

Assume we already have $(p_{k+1},\dots,p_m)=(\ti{p_{k+1}},\dots,\ti{p_m})$. Since $\beta\cdot p_k$ is singular value, by prop.(\ref{charinv}) we have 
\begin{displaymath}
\sharp\{i\ |\ 1\leq i\leq m,p_k\ |\ p_i\}=\dim\overline{\chi^{-1}(\beta p_k)}-\dim D=\sharp\{i\ |\ 1\leq i\leq m,p_k\ |\ \ti{p_i}\}
\end{displaymath}
We must have $p_k\leq \ti{p_k}$, or else $\sharp\{i\ |\ 1\leq i\leq m,p_k\ |\ p_i\}=\sharp\{i\ |\ k< i\leq m,p_k\ |\ \ti{p_i}\}=\sharp\{i\ |\ k<i\leq m,p_k\ |\ p_i\}$ (the second equality follows induction) which is a contradiction.

By the same argument, we have $\ti{p_k}\leq p_k$. So $p_k=\ti{p_k}$. In conclusion of the induction, we have $(\beta_1,\dots,\beta_m)=(\ti{\beta_1},\dots,\ti{\beta_m})$.
\end{proof}

\section{The Sizes of Jordan Blocks}\label{sect4}
For an orbit family $W$ of $V$, define relation $\ca{R}_W$: for $x,y\in W$, $x\ca{R}_W y$, if for any neighborhoods $U_x,U_y$ of $x,y$ in $W$, there is an orbit in $W$ intersects both $U_x,U_y$. Define operators $X,Y$:
\begin{align*}
X(W)&=0\cup\{x\in W\ |\ \exists y\in W\setminus \ca{O}(x), x\ca{R}_W y\}\\
Y(W)&=0\cup\{x\in W\ | x\ca{R}_W 0\}\subseteq X(W)
\end{align*}
It can be checked that $X(W),Y(W)$ are also orbit families, and if $W$ is an orbit subspace, so is $Y(W)$.

Define operator $Z$:
\begin{displaymath}
Z^{(1)}(W)=Y(W),\ Z^{(2r)}=Z^{(r)}(X(W)),\ Z^{(2r+1)}=Z^{(r)}(Y(W)),\ r\geq 1
\end{displaymath}
We observe that once regarding $X,Y$ as $0,1$ in binary system and writing $m\in\bb{N}$ in binary string by $X,Y$, then $Z^{(m)}$ is just the composition of those binary digits in turn. For example, $m=6$, then $m=YYX$ in binary format, and $Z^{(m)}(W)=Y(Y(X(W)))$.

Consider the invariant subspace $W_m$ corresponding to the Jordan block $J=
\begin{pmatrix}
i\beta&1& \\
 & \ddots &1\\
 & & i\beta
\end{pmatrix}_{m\times m}$. It's an orbit subspace.
\begin{prop}\label{redumat}
If the coordinate form (under the Jordan basis) of $W_m$ is $\{(x_1,\dots,x_m)\ |\ x_i\in\bb{C}\}$, then 
\begin{align*}
X(W_m)&=\{(x_1,\dots,x_{\lceil\frac{1}{2}m\rceil},0,\dots,0)\ |\ x_i\in\bb{C}\}=W_{\lceil\frac{1}{2}m\rceil}\\
Y(W_m)&=\{(x_1,\dots,x_{\lfloor\frac{1}{2}m\rfloor},0,\dots,0)\ |\ x_i\in\bb{C}\}=W_{\lfloor\frac{1}{2}m\rfloor}
\end{align*}
That is, if we have binary strings $\alpha,\beta$, then
\begin{align*}
\alpha X(W_{\beta X})=\alpha(W_\beta)&, \alpha X(W_{\beta Y})=\alpha(W_{\beta+1})\\
\alpha Y(W_{\beta X})=\alpha(W_\beta)&, \alpha Y(W_{\beta Y})=\alpha(W_\beta)
\end{align*}
\end{prop}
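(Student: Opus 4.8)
The plan is to work entirely in the Jordan coordinates of $W_m$, where the flow acts by $e^{tA}(x_1,\dots,x_m) = e^{i\beta t}\bigl(x_1 + tx_2 + \tfrac{t^2}{2!}x_3 + \cdots, \ \dots,\ x_{m-1}+tx_m,\ x_m\bigr)$. First I would compute the relation $\ca{R}_{W_m}$ explicitly. Given a point $x = (x_1,\dots,x_m)$ with, say, $x_j\neq 0$ and $x_{j+1}=\cdots=x_m=0$ (so the ``top'' nonzero coordinate is in position $j$), its orbit is an unbounded curve when $j\geq 2$ and a compact circle (or point) when $j\leq 1$. The key observation is that a nearby orbit through a perturbed point $x' = x + \varepsilon e_k$ (with $k$ the first index where we add mass) drifts, under large $|t|$, in a direction governed by the polynomial growth $t^{k-1}$ in the top slot; so two points $x,y$ are $\ca{R}_{W_m}$-related exactly when one can be reached from a neighborhood of the other along such a drifting family. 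I expect to show: $x\ca{R}_{W_m} y$ with $y\notin\ca{O}(x)$ iff the top nonzero coordinates of $x$ and $y$ both sit in positions $\leq\lceil m/2\rceil$ and the ``lower halves'' $(x_{\lceil m/2\rceil+1},\dots,x_m)$, $(y_{\ldots})$ vanish — because a coordinate $x_j$ with $j\leq\lceil m/2\rceil$ can be made to move (in the limit $t\to\pm\infty$, after rescaling) by perturbing in position $2j-1$ or $2j$, which stays within $W_m$, while coordinates with $j > \lceil m/2\rceil$ cannot be so approached without their own slot being perturbed, i.e. they are ``rigid''.

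Concretely, the main computation is a limit argument: fix $x$ with top index $j\leq\lceil m/2\rceil$; perturb to $x^{(\varepsilon)}$ by putting a small amount $\delta(\varepsilon)$ in coordinate $2j-1$ (for $X$) or $2j$ (for $Y$) and flowing for time $t(\varepsilon)\to\infty$ chosen so that $\delta(\varepsilon)\cdot t(\varepsilon)^{j-1}/(j-1)!$ (resp. one order lower) converges to a prescribed value while all other contaminated coordinates $\to 0$ or stay at the level of $x$. This exhibits an orbit passing near $x$ and near a second point $y$ differing from $x$ in coordinate $j$; iterating over all admissible $j$ shows $X(W_m)\supseteq W_{\lceil m/2\rceil}$ and $Y(W_m)\supseteq W_{\lfloor m/2\rfloor}$. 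For the reverse inclusions I would argue that if $x$ has a nonzero coordinate in position $>\lceil m/2\rceil$ (resp. $>\lfloor m/2\rfloor$ for $Y$, including the constraint that $Y$ forces relation with $0$, which kills the circle/rotational part), then along any orbit that comes close to $x$ the value in that high position is forced — by reading off the lowest contaminated slot and back-substituting — to agree with $x$'s, so no genuinely different $y$ (resp. no approach to $0$) is possible; hence $x\notin X(W_m)$ (resp. $\notin Y(W_m)$). The distinction between $\lceil\cdot\rceil$ and $\lfloor\cdot\rfloor$, and between $X$ and $Y$, comes down to whether perturbing the very middle coordinate (when $m$ is even, position $m/2$ vs $m/2+1$) produces an approach to a \emph{different} point (allowed for $X$) versus an approach to $0$ specifically (needed for $Y$): with $m = 2k$, coordinate $k$ can drift by perturbing slot $2k = m$, giving $x\ca{R}\, y$ for $y\neq x$ but the circle through $x$ need not shrink, so $k\in X$ but the honest ``$\ca{R}_W 0$'' requires one fewer, $k-1\in Y$ — though here I should double-check against the even case giving $\lfloor m/2\rfloor = k$, which suggests instead that $Y$ also reaches position $k$ and the subtlety is purely in the odd case; I will verify the boundary index carefully.

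The hard part will be making the ``approach'' limit rigorous and, in particular, establishing the \emph{reverse} inclusions cleanly — i.e. proving that high-index coordinates are rigid. The naive obstacle is that $\ca{R}_{W_m}$ quantifies over \emph{all} neighborhoods and \emph{all} orbits, so one must show no clever choice of a far-away orbit sneaks close to two points with mismatched high coordinates; the resolution is the triangular (nilpotent-plus-rotation) structure of $e^{tA}$, which lets one recover the high coordinates of any point on an orbit from finitely many lower ones plus $t\bmod(2\pi/\beta)$, making them locally determined and hence continuous along approaching sequences. Once the coordinate forms of $X(W_m)$ and $Y(W_m)$ are pinned down, the binary-string reformulation $\alpha X(W_{\beta X}) = \alpha(W_\beta)$ etc.\ is immediate bookkeeping, since $\lceil\cdot\rceil$ and $\lfloor\cdot\rfloor$ of a binary number are exactly ``delete the last bit, rounding up or down'', and the operators $X,Y$ commute with the ambient $\alpha$-reduction because they act only on the internal Jordan coordinates of the block.
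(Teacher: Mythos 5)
Your overall architecture (compute $\ca{R}_{W_m}$ in the Jordan coordinates, show the high coordinates are ``rigid'' by a limiting argument exploiting the triangular structure, then reduce the binary-string identities to bookkeeping) is the same as the paper's, and your sketch of the reverse inclusions is essentially the paper's rescaling argument. The genuine gap is in your forward inclusion. The mechanism ``coordinate $x_j$ can be made to move by perturbing the single slot $2j-1$ or $2j$'' fails as soon as the intermediate coordinates of $x$ are nonzero, which is exactly the generic case you must handle. Take $m=5$ and $x=(x_1,x_2,x_3,0,0)$ with $x_3\neq 0$: if you perturb only slot $5$ by $\epsilon_n$ and flow for time $t_n\to\infty$, keeping coordinate $2$ bounded forces $t_n^2\epsilon_n\to -6x_3$, and then coordinate $1$ behaves like $x_1+t_nx_2+\tfrac{t_n^2}{4}x_3+O(t_n)$, which diverges for every choice of the single parameter $\epsilon_n$. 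To exhibit the approach one must perturb \emph{all} the lower-half slots simultaneously, i.e.\ solve $e^{t_nJ}x^n=y^n$ for the lower-half unknowns with the upper half of $x$ and of $y$ prescribed, and then prove the solutions have the right asymptotics ($x^n_{r+2},\dots,x^n_{2r+1}\to 0$ and $x^n_{r+1}\to x_{r+1}$). That solvability-with-asymptotics is exactly the content of the paper's prop.(\ref{propeq1}) (invertibility of the factorial matrix and the formula $x_0=(-1)^ry_r+\sum_{i<r}a_iy_i$, proved by a polynomial-interpolation trick); your proposal contains no substitute for this lemma, and without it the inclusion $W_{\lceil m/2\rceil}\subseteq X(W_m)$ is not established.

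The second gap is the $X$ versus $Y$ boundary, which you explicitly defer but which cannot be deferred: your provisional ``iff'' description of $\ca{R}_{W_m}$ would give $Y(W_{2r+1})=W_{r+1}$, contradicting the claimed $Y(W_m)=W_{\lfloor m/2\rfloor}=W_r$, while your even-case discussion ($Y$ reaching only $k-1$) errs in the opposite direction --- you noticed the tension but did not resolve it. The missing fact is that along any approach with $t_n\to\infty$ the middle coordinate of the limiting target is forced to satisfy $y_{r+1}=(-1)^rx_{r+1}$ up to the rotation factor (in particular $|y_{r+1}|=|x_{r+1}|$), so $x\,\ca{R}_W\,0$ requires $x_{r+1}=0$; this is what separates $\lceil m/2\rceil$ from $\lfloor m/2\rfloor$ in the odd case, and it again falls out of the same linear-system analysis via prop.(\ref{propeq1}). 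As written, the proposal pins down neither inclusion at the critical middle index, so the key quantitative content of the proposition is missing.
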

\begin{proof}
We first consider the case $m=2r+1$ and verify the proposition following:
\begin{prop}\label{propeq1}
$\begin{pmatrix}
\frac{1}{r!}&\frac{1}{(r+1)!}&\ldots&\frac{1}{(2r)!}\\
\vdots&\vdots&\ddots&\vdots\\
\frac{1}{1!}&\frac{1}{2!}&\ldots&\frac{1}{(r+1)!}\\
\frac{1}{0!}&\frac{1}{1!}&\ldots&\frac{1}{r!}
\end{pmatrix}
\begin{pmatrix}
x_0\\x_1\\\vdots\\x_r
\end{pmatrix}
=
\begin{pmatrix}
y_0\\y_1\\\vdots\\y_r
\end{pmatrix}
\Rightarrow
x_0=(-1)^r y_r+\sum_{i<r} a_i y_i
$ where $a_i$ are constants depending on $r$.
\end{prop}
\begin{proof}
Let $P(t)=\frac{x_0}{r!}t^r+\cdots+\frac{x_r}{(2r)!}t^{2r}$. Then this system of linear equations is $(P(1),P'(1),\dots,P^{(r)}(1))=(y_0,y_1,\dots,y_r)=y$. Once $y=0$, we have $(t-1)^{r+1}|P$, while $t^r|P$ by definition. Hence $P=0$, which implies the matrix is invertible. Moreover, set $y_0=y_1=\cdots=y_{r-1}=0$, we have $(t-1)^rt^r|P$ whence $P(t)=(-1)^r\frac{x_0}{r!}(t-1)^rt^r$, and $x_0=(-1)^r y_r$ follows. So by Cramer's rule, in general $x_0=(-1)^r y_r+\sum_{i<r} a_i y_i$.
\end{proof}
The explicit form of the system $e^{tJ}x=y(t\neq0)$ is:
\begin{equation}\label{sys1}
\left\{\begin{array}{rrcrl}
x_1+\cdots+\frac{t^{r-1}}{(r-1)!}x_r+&\frac{t^r}{r!}x_{r+1}&+\cdots+&\frac{t^{2r}}{(2r)!}x_{2r+1}&=y_1e^{-i\beta t}\\
\ldots\quad\ldots&\quad&\ldots&\ldots&\ldots\\
x_r+&\frac{t}{1!}x_{r+1}&+\cdots+&\frac{t^{r+1}}{(r+1)!}x_{2r+1}&=y_re^{-i\beta t}\\
&x_{r+1}&+\cdots+&\frac{t^r}{r!}x_{2r+1}&=y_{r+1}e^{-i\beta t}\\
&&\ldots&\ldots&\ldots\\
&&&x_{2r+1}&=y_{2r+1}e^{-i\beta t}
\end{array}\right.
\end{equation}
Multiply the $i$-th equation by $t^{i-r-1}$ ($i=1,\dots,r+1$), then
\begin{equation}\label{sys2}
\left\{\begin{array}{rrcrl}
\frac{1}{r!}x_{r+1}+&\frac{1}{(r+1)!}tx_{r+2}&+\cdots+&\frac{1}{(2r)!}t^rx_{2r+1}&=\varepsilon_1\\
\frac{1}{(r-1)!}x_{r+1}+&\frac{1}{r!}tx_{r+2}&+\cdots+&\frac{1}{(2r-1)!}t^rx_{2r+1}&=\varepsilon_2\\
\ldots&\ldots&\ldots&\ldots&\ldots\\
\frac{1}{1!}x_{r+1}+&\frac{1}{2!}tx_{r+2}&+\cdots+&\frac{1}{(r+1)!}t^rx_{2r+1}&=\varepsilon_r\\
\frac{1}{0!}x_{r+1}+&\frac{1}{1!}tx_{r+2}&+\cdots+&\frac{1}{r!}t^rx_{2r+1}&=y_{r+1}e^{-i\beta t}\\
\end{array}\right.
\end{equation}

Part(I).
For any $x^0\in X(W_m)$, suppose $x^0\ca{R}_W y^0, y^0\notin\ca{O}(x^0)$. Then we have sequences of points: $x^n\to x^0,y^n\to y^0$ where $y^n=e^{t_nJ}x^n, n\to+\infty$. Since $y^0\notin \ca{O}(x^0)$, we may assume $t_n\to \infty$. By the definition of system (\ref{sys2}), we have $\varepsilon_1^n,\dots,\varepsilon_r^n\to 0$. And taking the Cramer's rule on system (\ref{sys2}), we have $x_{r+2}^n,\dots,x_{2r+1}^n\to0$, which implies $x_{r+2}^0=\cdots=x_{2r+1}^0=0$.

Part(II).
Conversely, given $x=(x_1,\dots,x_{r+1},0,\dots,0)$. Set $t_n=n$, and set
\begin{displaymath}
\left\{\begin{array}{rl}
y_1&\neq e^{i\beta t}(x_1+\frac{t}{1!}x_2+\cdots+\frac{t^r}{r!}x_{r+1})\\
y_2&\neq e^{i\beta t}(x_2+\cdots+\frac{t^{r-1}}{(r-1)!}x_{r+1})\\
&\ldots\quad\ldots\\
y_r&\neq e^{i\beta t}(x_r+tx_{r+1})\\
y_{r+1}&= (-1)^rx_{r+1}
\end{array}\right.
\end{displaymath}($\forall t\in\bb{R}$).
Set $y=(y_1,\dots,y_{r+1},0,\dots,0)$. By definition, $y\notin\ca{O}(x)$.

According to prop.(\ref{propeq1}), given $x_1,\dots,x_r,y_1,\dots,y_{r+1}$, the system (\ref{sys1}) has unique solution for each $n>0$:
\begin{align*}
x^n&=(x_1,\dots,x_r,x_{r+1}^n,\dots,\dots,x_{2r+1}^n)\\
y^n&=(y_1,\dots,y_r,y_{r+1},y_{r+2}^n,\dots,y_{2r+1}^n)
\end{align*}
By the same argument in Part(I), we have $x_{r+2}^n,\dots,x_{2r+1}^n\to0,y_{r+2}^n,\dots,y_{2r+1}^n\to0$. According to prop.(\ref{propeq1}), $x_{r+1}^n=(-1)^r y_{r+1}+\sum_{i<r} a_i \varepsilon_i\to(-1)^r y_{r+1}=x_{r+1}$ ($n\to +\infty$). Hence we have $x^n\to x, y^n\to y, y^n\in\ca{O}(x^n)$, that is, $x\ca{R}_W y$. In conclusion, $X(W_m)=W_{\lceil\frac{1}{2}m\rceil}$.

By similar argument, we can also deduce the case $m=2r$ and the part for $Y$.
\end{proof}
By considering coordinate forms, the prop.(\ref{redumat}) deduces this corollary immediately.
\begin{cor}\label{cor1}
1. $Z^{(k)}(W_m)\left\{\begin{array}{ll}=0&,k\geq m\\\neq0&,k<m\end{array}\right. 2. $ $Z^{(k)}(V)\cap V_0=Z^{(k)}(V_0)$ 3. $Z^{(k)}(W_m)\cap \bb{B}\left\{\begin{array}{ll}=0&,k\geq m\\\cong\bb{C}&,k<m\end{array}\right.$
\end{cor}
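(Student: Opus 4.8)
The three assertions are best taken in the order (1), (3), (2): the first two are immediate consequences of Proposition \ref{redumat} applied to a single block, while (2) needs control of how $X,Y$ interact with the splitting $V=V_+\oplus V_-\oplus V_0$.

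For (1) I would induct on $k$. The recursion defining $Z$ together with Proposition \ref{redumat} gives $Z^{(1)}(W_m)=Y(W_m)=W_{\lfloor m/2\rfloor}$, which is $0$ exactly when $m\le 1$, i.e. when $1=k\ge m$. For $k\ge 2$ write $k=2r$ or $k=2r+1$ with $r\ge 1$; then
\begin{align*}
Z^{(2r)}(W_m)&=Z^{(r)}(X(W_m))=Z^{(r)}(W_{\lceil m/2\rceil}),\\
Z^{(2r+1)}(W_m)&=Z^{(r)}(Y(W_m))=Z^{(r)}(W_{\lfloor m/2\rfloor}).
\end{align*}
By the inductive hypothesis these vanish iff $r\ge\lceil m/2\rceil$, resp. $r\ge\lfloor m/2\rfloor$, and the elementary identities $r\ge\lceil m/2\rceil\Leftrightarrow 2r\ge m$ and $r\ge\lfloor m/2\rfloor\Leftrightarrow 2r+1\ge m$ (all quantities integral) turn this into $Z^{(k)}(W_m)=0\Leftrightarrow k\ge m$. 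The same iteration shows, more precisely, that $Z^{(k)}(W_m)$ is always a subspace of the form $W_j$, with $j=0$ iff $k\ge m$ and $j\ge 1$ iff $k<m$.

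Part (3) then follows at once. By Section \ref{sect1} the bounded orbits inside a Jordan block $W_j$ with purely imaginary eigenvalue are precisely those supported on the first coordinate, so $\bb{B}\cap W_j=\{(x_1,0,\dots,0)\}\cong\bb{C}$ for $j\ge 1$ and $\bb{B}\cap W_0=0$; combined with $Z^{(k)}(W_m)=W_{j}$ and the dichotomy for $j$ just established, this gives $Z^{(k)}(W_m)\cap\bb{B}=0$ for $k\ge m$ and $\cong\bb{C}$ for $k<m$.

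For (2) the plan is to reduce $Z^{(k)}(V)$ to the blocks of the Jordan decomposition $V=\bigoplus_j W^{(j)}$: granting $Z^{(k)}(V)=\bigoplus_j Z^{(k)}(W^{(j)})$, intersection with $V_0$ — the sub-sum over the purely imaginary blocks — keeps exactly the non-hyperbolic summands, yielding $\bigoplus_{j\text{ imaginary}}Z^{(k)}(W^{(j)})=Z^{(k)}(V_0)$. The inclusion $Z^{(k)}(V_0)\subseteq Z^{(k)}(V)\cap V_0$ is easy, since $X,Y$ are monotone under inclusion of orbit families and satisfy $X(W),Y(W)\subseteq W$, so only the reverse inclusion needs work: for $x\in V_0$ exhibited in $Z^{(k)}(V)$ by $x^n\to x$, times $t_n\to\pm\infty$ and limits $e^{t_nA}x^n\to y$, one decomposes $x^n$ along $V_+\oplus V_-\oplus V_0$, observes that the $V_-$ (resp. $V_+$) part collapses to $0$ automatically, and is left to check that the $V_0$-components already realize the required relation inside $V_0$. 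The delicate point — which I expect to be the main obstacle — is exactly the distributivity $Z^{(k)}(V)=\bigoplus_j Z^{(k)}(W^{(j)})$: the relation $\ca{R}$ can a priori couple coordinates lying in different Jordan summands, because the times realizing the approach must be common to all summands, and $X$ and $Y$ taken separately do \emph{not} distribute over the direct sum. What rescues the statement is that $Z^{(k)}$ always has outermost operation $Y$ (its binary string begins with the leading digit $Y$): the final $Y$ forces an orbit to approach $0$ in every summand at the same instant, and the rigid modulus/polynomial-growth behaviour of each block constrains this independently, so the cross-terms introduced by interior $X$'s are cleaned up. Making this last assertion rigorous is where one re-runs, on all of $V$ simultaneously, the coordinate estimates from the proof of Proposition \ref{redumat}.
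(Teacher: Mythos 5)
Your parts (1) and (3) are correct and are in substance what the paper intends: the paper disposes of the whole corollary with ``by considering coordinate forms, Proposition \ref{redumat} deduces this immediately'', and your induction on the binary expansion of $k$, together with the identification of $\bb{B}\cap W_j$ with the first coordinate of the block (so $\cong\bb{C}$ for $j\ge 1$, and $0$ for $j=0$), is exactly that deduction written out.

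Part (2) is where there is a genuine gap, and the specific mechanism you propose would fail. The distributivity $Z^{(k)}(V)=\bigoplus_j Z^{(k)}(W^{(j)})$ over all Jordan blocks is false, and the outermost $Y$ does not rescue it: already for $k=1$ and $A=\mathrm{diag}(1,-1)$ on $\bb{R}^2$ one has $Y(W^{(1)})=W^{(1)}$ and $Y(W^{(2)})=W^{(2)}$ (the orbit of a small same-sign point passes near any given point of the half-line and near $0$), so $\bigoplus_j Y(W^{(j)})=\bb{R}^2$, whereas $Y(V)=\{(x,y)\,:\,xy=0\}$: hyperbola orbits $xy=c$ with $c\to0$ connect points of the axes to $0$, but every orbit near a point with $xy\neq0$ stays at distance at least $\sqrt{2|xy|}$ from the origin. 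Thus $Y(V)$ is a proper subset of the right-hand side and is not even a linear subspace, although the operation applied is a single (outermost) $Y$; cross-block coupling is not ``cleaned up'' by the leading $Y$. Nor can one argue operator-by-operator: for $A=\mathrm{diag}(1,0)$ one finds $X(V)\cap V_0=V_0$ while $X(V_0)=0$, so the equality in (2) holds only for the full composition $Z^{(k)}$ and genuinely uses its structure. What is easy is only the inclusion $Z^{(k)}(V_0)\subseteq Z^{(k)}(V)\cap V_0$ (monotonicity), and the observation that for $x,y\in V_0$ one has $x\ca{R}_V y\Leftrightarrow x\ca{R}_{V_0}y$, because the projection onto $V_0$ along $V_+\oplus V_-$ is continuous and commutes with $e^{tA}$ and so carries witnessing orbits in $V$ to witnessing orbits in $V_0$; but this does not give the reverse inclusion, since the intermediate families $X(V)$, $Y(X(V))$, \dots{} are not contained in $V_0$ and the witnesses $y$ for $X$ may have nonzero hyperbolic part. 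A complete argument has to track the hyperbolic coordinates through the entire binary string (in effect redoing the coordinate estimates of Proposition \ref{redumat} on all of $V$, where, as the saddle example shows, the intermediate sets need not be subspaces); this is precisely the step your proposal defers, so as it stands part (2) remains unproved.
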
(recall that $\bb{B}$ is the subspace corresponding to those upper-left corners of all the blocks subordinate to zero real parts, see section \ref{sect1}.)\qed

\begin{mt}[Non-hyperbolic case]
Stated in the main theorem, section \ref{preli}.
\end{mt}
\begin{proof}
Consider the topologically invariant orbit subspaces $F^{(k)}=Z^{(k)}(V)\cap\bb{B}$, that is, $h$ induces homeomorphism $F^{(k)}\to\tti{F^{(k)}}$.

The cor.(\ref{cor1}) tells us $F^{(k)}$ is the direct sum of the one-dimensional eigenspaces corresponding to upper-left corners of all the Jordan blocks belonging to $V_0$ whose order$>k$. According to lemma (\ref{eigensame}), the eigenvalues of $F^{(k)},\tti{F^{(k)}}$ are exactly the same. Let $N(\lambda,m)$ denote the number of Jordan blocks of order $m$ subordinate to eigenvalue $\lambda$ ($Re(\lambda)=0$). Then the equality of multiplicities for $\lambda$ in $F^{(k)},\tti{F^{(k)}}$ is:
\begin{displaymath}
\sum_{m>k}N(\lambda,m)=\sum_{m>k}\tti{N}(\lambda,m)
\end{displaymath}
Hence we have $N(\lambda,m)=\tti{N}(\lambda,m)$, that is, $A_0,\tti{A_0}$ are complex-linear-conjugate. By the prop.(\ref{complex}), we know the original real linear operators $A_0,\tti{A_0}$ are real-linear-conjugate.
\end{proof}


%

\end{document}